\documentclass{article}
\usepackage{amsmath,amssymb,enumerate, fullpage}
\usepackage{tikz,float}



\newcommand{\tmstrong}[1]{\textbf{#1}}
\newenvironment{enumerateroman}{\begin{enumerate}[i.] }{\end{enumerate}}
\newenvironment{itemizeminus}{\begin{itemize} }{\end{itemize}}
\newenvironment{proof}{\noindent\textbf{Proof\ }}{\hspace*{\fill}$\Box$\medskip}
\newtheorem{proposition}{Proposition}

\begin{document}

\title{Non-simultaneous match-stick geometry}\author{}\maketitle

{\tmstrong{\title{Pfannerer Stephan \footnote[1]{e1026392@student.tuwien.ac.at; Vienna University of Technology}, Schram
Philippe \footnote[2]{pschram@ulb.ac.be; Universit\'{e} libre de Bruxelles}}\author{}\maketitle}}

\begin{abstract}
The purpose of this paper is to prove that every finite set of points that can
be constructed in the Euclidean plane by using a compass and a ruler can also
be constructed by using unitary match-sticks in a non-simultaneous way and
following to a certain set of postulates. To prove this, we will deduce the
Euclidean axioms for our defined set of axioms.
\end{abstract}

\section{Introduction}
In 1939 T.R. Dawson proved (see \cite{Dawson}) that every point in the plane that can be constructed with a compass and a ruler can be constructed by laying match-sticks according to some rules. One of his most important postulates is, that two sticks may be laid simultaneously. Some similarly interesting constructions are done in \cite{Wells}.

 Our aim in the following is to define a set of postulates that are equivalent to the Euclidean postulate systems. These postulates will be similar to Dawson's, except that no two match-sticks can be used at the same time. Thus we call our constructions "non-simultaneous".

We will prove that
\begin{itemizeminus}
  \item we can extend every bounded line segment
  
  \item we can draw a line between any two given points
  
  \item we can construct right angles
  
  \item there exists a unique parallel line to a given line and through a given point
  
  \item we can describe circles geometrically. Since the constructed geometry is a point-geometry, it is sufficient to prove that we can construct the geometrical intersection between 2 given circles, respectively one circle and one line.
\end{itemizeminus}
The first point to prove is necessary since we only work on bounded match-sticks. This point will in fact occupy the major part of this work. Since the constructed geometry is included in the Euclidean space and
the definition of lines coincide, the fourth statement can be reduced to the constructibility of the only parallel line.

\section{The postulates}

The following are the postulates we will be working on for the rest of this paper.

\begin{enumerateroman}
  \item One may lay a match-stick through any two given points at distance
  equal or less than the unit, where one or even two of the points may be
  chosen as extremity of the stick segment. This procedure also permits us to
  determine whether the distance of two given points is strictly less, equal
  or strictly greater than the unit.
  
  \item One may choose an arbitrary point on a given stick segment, and it can
  be chosen such that it is none of the extremities.
  
  \item From a given point, one may use a match-stick as a unitary ruler. This
  ruler does not leave a trace, but can be used to determine a point with unit
  distance to the given point and relatively to a given set of match-sticks.
  No two match-sticks can be used as a compass at the same time.
\end{enumerateroman}

Notice that postulates i. and iii. are directly related to the non-simultaneousness of our construction constraints. Postulate ii. is a simple rule that seems necessary, but natural.

\section{Basic constructions}

We start by making some elementary constructions. These construction schemes will serve as building bricks for the more sophisticated constructions that follow. Most of these constructions directly use the proposed postulates. The first result will solve the problem that our postulates only construct bounded objects. We have to prove the following proposition:

\begin{proposition}
  Every bounded line can be extended to an arbitrary length.
\end{proposition}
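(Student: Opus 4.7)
The plan is to reduce the problem to iteratively pushing a \emph{frontier} point one unit further along the line of $AB$ at each round, using a three-stick sub-routine that cancels out the lack of control in postulate (ii). Because any match-stick laid through two collinear given points stays on their common line, and because the unit ruler of postulate (iii) only marks new points on already-laid match-sticks, every constructible point from the data $\{A,B\}$ lies on line $AB$; no attempt will be made to escape into the plane.

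The sub-routine, given a length-$1$ stick with extremities $L$ and $R$ on line $AB$ with $R$ designated as the current frontier, runs as follows. First, pick an interior point $M$ of $[L,R]$ via postulate (ii); since $M$ is neither $L$ nor $R$, one has $|MR| < 1$. Next, by postulate (i), lay a stick through $M$ and $R$ choosing $M$ as extremity: its far extremity is a new point $D$ on line $AB$ with $|MD| = 1$, so that $|RD| = 1 - |MR| \in (0,1)$ and $D$ lies on the opposite side of $R$ from $M$. Finally, by postulate (i) again, lay a stick through $R$ and $D$ choosing $R$ as extremity; since $|RD| < 1$ this stick has $D$ in its interior, and its far extremity $R'$ satisfies $|RR'| = 1$ and lies on the same side of $R$ as $D$. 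The output is a length-$1$ stick $[R, R']$ whose extremity $R'$ is exactly one unit past $R$ along the chosen direction.

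For initialization: if $|AB| < 1$, I would first apply postulate (i) to $A,B$ with $A$ as extremity to produce a length-$1$ stick $[A, A_1]$ with $A_1$ past $B$; if $|AB| = 1$, take $A_1 := B$. Iterating the sub-routine $n$ times then yields a frontier point at distance $n$ past $A_1$, providing an extension of arbitrary length.

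The main obstacle I anticipate is precisely the uncontrolled location of $M$ produced by postulate (ii): a naive one-stick extension would only push the line by the positive but unknown amount $|AM|$, which under adversarial interpretations of ``arbitrary'' might fail to accumulate to divergence. The crucial point of the three-stick construction is that the auxiliary stick $MD$ is used only to manufacture a point $D$ with $0 < |RD| < 1$; the third stick, pinned at $R$ as extremity, then has length exactly $1$, yielding a deterministic unit increment at every round and washing out the ignorance of the position of $M$.
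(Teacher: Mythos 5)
Your proof is correct and follows essentially the same route as the paper: there, one picks an interior point $P$ of the given stick $[A,B]$, lays a unit stick from $P$ through $B$ to obtain a point $Q$ beyond $B$, and then lays a unit stick from $B$ through $Q$, producing a unit translate of the original stick that advances the frontier by exactly one unit per round. Your explicit initialization for $|AB|\le 1$ and your remark that the third stick neutralizes the unknown position of the interior point are just more careful articulations of the same three-stick idea.
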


\begin{proof}
  Let $A$, $B$ be the extremities of a given stick. Using postulate ii. we may
  choose a point $P$ in the interior of the stick. Using postulate i. we lay down a
  stick having $P$ as an extremity and passing through $B$, and its other
  extremity shall be denoted by $Q$. A third stick having $B$ as one extremity and
  passing through $Q$, is a translation of the first and has exactly one point in
  common with it. By reusing this algorithm, we can enlarge any given segment.
  Therefore, we may in the following draw lines in the classical way.
\end{proof}

Remark that this allows us to determine the intersection point of two given (i.e. determined by respective line segments) lines in a finite time. It does however not enable us to discover whether two lines are parallel. We will give another way for this later on. 

Now we construct perpendicular lines. Notice that the nature of constraints in non-simultaneousness renders unappliable most common constructions, based for example on the perpendicularity of diagonals in a rhombus.

\begin{proposition}
  For any line d and any $A \in d$, we can construct the line perpendicular to
  $d$ at $A$.
\end{proposition}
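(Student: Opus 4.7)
The plan is to construct the perpendicular at $A$ by producing a point $P$ off $d$ that is equidistant from two points $B,B'$ of $d$ placed symmetrically about $A$; the line $AP$ is then the desired perpendicular, and it can be prolonged to a full line via Proposition 1. Concretely, I would first use postulate iii at $A$ with $d$ as the reference stick to mark $B$ and $B'$ on $d$ on opposite sides of $A$ at equal distance from $A$, extending $d$ first by Proposition 1 if necessary. The common distance should be chosen strictly less than $1/2$, obtainable by postulate ii applied to a sub-segment of $d$, so that all later unit-length matchsticks comfortably span the intermediate configurations. Since $A$ is the midpoint of the segment $BB'\subset d$, any point equidistant from $B$ and $B'$ lies on the perpendicular to $d$ at $A$, so it will be enough to produce one such point not on $d$.

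The heart of the argument is the construction of $P$. The direct approach, intersecting two unit circles centered at $B$ and $B'$, is exactly what the non-simultaneity clause of postulate iii forbids (and the author has already flagged the rhombus-based shortcuts as inapplicable). Instead, I would proceed by auxiliary scaffolding: introduce an off-$d$ point via postulate iii at $A$ (reading the ``relatively to a given set of match-sticks'' clause as permitting an arbitrary direction once a further stick has been laid), and then perform twin sequences of stick placements emanating from $B$ and from $B'$, each step using only a single compass. By arranging these two sequences to be combinatorial mirror images of one another, the intersection at which they close is forced, by the reflective symmetry of the abstract procedure with respect to the unknown perpendicular, to lie on that perpendicular. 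The distance-comparison ability granted by postulate i serves throughout as a verification oracle, letting us check that the lengths built on the $B$-side really do match those on the $B'$-side.

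Once $P\neq A$ has been produced, I would invoke postulate i to lay the stick $AP$ and Proposition 1 to extend it to a full line, giving the perpendicular at $A$. I expect the main obstacle to be precisely the symmetric scaffolding step: the mirror symmetry has to be \emph{forced} by the combinatorics of the sequential stick placements, because the reflection axis we would like to invoke is exactly the perpendicular we are trying to construct. I anticipate that making this rigorous will require a short chain of carefully chosen intermediate points picked via postulate ii, together with a sub-lemma asserting that a length (or an angle) can be transferred from one side of $d$ to the other by a strictly sequential procedure using at most one compass at a time. Once that transfer is in hand, the rest of the proof is a routine combination of postulates i--iii and Proposition 1.
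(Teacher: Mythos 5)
There is a genuine gap at the heart of your proposal: the step that actually produces the point $P$ off $d$ equidistant from $B$ and $B'$ is never carried out, and the idea you sketch for it does not work as stated. You propose to run two sequences of stick placements from $B$ and from $B'$ that are ``combinatorial mirror images'' of one another, so that symmetry forces their meeting point onto the perpendicular. But to mirror a stick placement from the $B$-side onto the $B'$-side you must reflect each freely chosen direction across the axis of symmetry --- which is exactly the perpendicular you are trying to construct. The distance-comparison oracle of postulate i can only confirm equality of lengths you have already laid; it cannot tell you in which direction to lay a stick so that the configuration becomes symmetric. You acknowledge this yourself and defer it to a ``sub-lemma'' on transferring lengths or angles across $d$, but that sub-lemma is essentially equivalent to the original problem: a point equidistant from $B$ and $B'$ is a point on the perpendicular bisector of $[B,B']$, and in the paper's own development the perpendicular bisector (Proposition 4) is constructed \emph{using} the perpendicular at a point (Proposition 2), so your reduction runs the logical dependency backwards. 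There are also smaller unjustified steps (marking $B$ and $B'$ at \emph{equal} distance less than $1/2$ from $A$ is itself not immediate from postulates ii and iii, which give you either an arbitrary point or a point at exactly unit distance).

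The paper avoids symmetry altogether by a Thales-circle argument that you may find instructive: lay a unit stick $(1)$ from $A$ to a point $Q$ off $d$ at a small angle to $d$; from $Q$ lay a stick $(2)$ whose far extremity $R$ lands on $d$ with $R\neq A$; extend $(2)$ past $Q$ by one unit to a point $S$. Then $\overline{QA}=\overline{QR}=\overline{QS}=1$ with $Q$ the midpoint of $[R,S]$, so $A$, $R$, $S$ lie on a circle of radius $1$ centred at $Q$ with $RS$ a diameter, and Thales' theorem gives $\angle SAR=90^{\circ}$; hence $S$ is on the desired perpendicular. A single auxiliary point and three collinear unit lengths replace the entire mirrored scaffolding. (The remaining work in the paper --- repeating the construction three times and applying the pigeonhole principle so that two of the resulting points $S_i$ lie on the same side of $d$ and close enough to join by a stick --- handles the fact that each individual $S$ may be farther than one unit from $A$.) If you want to salvage your outline, you would need to supply an explicit, non-circular construction for the transfer sub-lemma; as written, the proof does not go through.
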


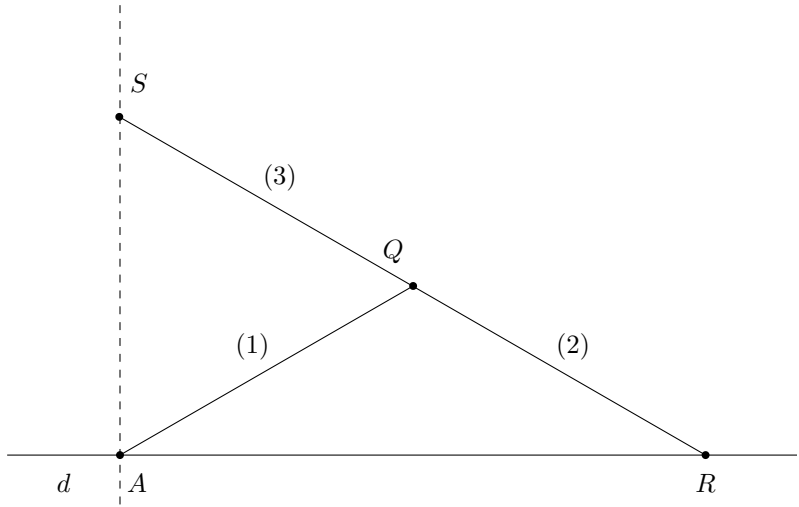
\begin{figure}[H]
\centering

\begin{tikzpicture}[x=1.5cm,y=1.5cm]
\clip(-3.5,-0.5) rectangle (4,4.5);

\draw (-3,0)-- (6,0);
\draw (-2.5,-0.25) node {$d$};
\fill(-2,0) circle (1.5pt);
\draw (-1.85,-0.25) node {$A$};
\draw[shift={(-2,0)},rotate=30] (0,0)--(3,0);
\draw[shift={(-2,0)},rotate=30] (1.5,0.25) node {(1)};
\draw[shift={(3.19,0)},rotate=150] (0,0)--(6,0);
\draw[shift={(3.19,0)},rotate=150] (1.5,-0.25) node {(2)};
\draw[shift={(3.19,0)},rotate=150] (4.5,-0.25) node {(3)};
\fill(3.19,0) circle (1.5pt);
\draw (3.19,-0.25) node {$R$};
\fill[shift={(-2,0)},rotate=30] (3,0) circle (1.5pt);
\draw[shift={(-2,0)},rotate=30] (3,0.35) node {$Q$};
\fill[shift={(3.19,0)},rotate=150] (6,0) circle (1.5pt);
\draw[shift={(3.19,0)},rotate=150] (6,-0.35) node {$S$};

\draw[dashed] (-2,-1)--(-2,4);

\end{tikzpicture}

\caption{Construction of a perpendicular line}
\end{figure}

\begin{proof}
  Choose a unit segment $(1)$ having $A$ as one of its extremities and not lying on $d$. Let us call $Q$ the second extremity of $(1)$. $(2)$ is laid such that
  its second extremity $R$ lies on $d$, but is different from $A$. $(3)$ is the
  extension of $(2)$ as shown on the picture. As we have $1 =
  \overline{{QA}} = \overline{{QR}} = \overline{{QS}}$ and $Q =
  {mid} (R, S)$, Thales' theorem implies that $\angle {SAR} =
  90^{\circ}$. An easy computation shows that this construction only works if the
  angle formed by the initial line and the first match-stick is less than $30^{\circ}$.
  Note that the maximal distance between $A$ and $S$ is less than the hypotenusis,
  which has length of two. Therefore, by doing this construction 3 times with different starting segements $(1)_1\neq (1)_2 \neq (1)_3$, the
  pigeonhole principle yields that at least two of the points $S_1$ , $S_2$ , $S_3$ are
  on the same side of $d$. The above geometrical argument shows that they
  both have to lie on the perpendicular line. As their maximal distance from $A$
  is 2, either two of the constructed points are closer than one unit from each other, or one of them has
  a distance from $A$ which is less than one unit. In both cases, Postulate i. and
  Proposition 1. allow us to construct the perpendicular line to $A$.
\end{proof}

The next step will enable us to construct coordinate grids. We will call coordinate grid the covering of the plane with
unit squares that are all parallel to each other. Since we work on a finite number of steps, the aim will be to define a grid that contains each point after a finite number of steps.

\begin{proposition}
  For any two points $A, B$ in the plane, there exists a construction sequence
  of a coordinate grid such that $A$ and $B$ are contained in the grid after a
  finite number of construction steps.
\end{proposition}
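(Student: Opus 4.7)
The plan is to center the grid at $A$, erect perpendicular coordinate axes through $A$, and iteratively extend the axes and add parallel grid lines at unit spacing until $B$ lies inside the covered region. Since $|AB|$ is a fixed finite real number, only finitely many unit steps are needed in each coordinate direction, and each such step decomposes into finitely many match-stick operations.

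Concretely, I would first apply Proposition~2 at $A$ to an auxiliary line through $A$, producing two perpendicular lines $d_1$ and $d_2$ that meet at $A$; these serve as the coordinate axes, and can be extended as far as needed by Proposition~1. Next, using postulate iii with $A$ as the given point and $d_1$ as the ``given set of match-sticks'' to which the unitary ruler is constrained, I would determine the point $A_1$ on $d_1$ at unit distance from $A$. Iterating, each newly marked point becomes the anchor for marking its neighbour one unit further along the same axis, yielding sequences $\ldots, A_{-2}, A_{-1}, A, A_1, A_2, \ldots$ on $d_1$ and analogously on $d_2$. Finally, at every marked point I would invoke Proposition~2 once more to raise a line perpendicular to the corresponding axis; these perpendiculars are all parallel to the other axis and, together with the marked axes, tile a rectangular region around $A$ by unit squares.

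For termination, set $n = \lceil |AB| \rceil$. In the orthonormal coordinate system induced by $d_1, d_2$ with origin $A$, the point $B$ has coordinates $(x_B, y_B)$ with $|x_B|, |y_B| \le |AB| \le n$, so $B$ lies in the square $[-n,n] \times [-n,n]$, which is entirely tiled by unit cells of the grid once the iteration above has been carried out $n$ times in each direction of each axis. The total number of unit markings and perpendicular erections is $O(n)$, and each step is itself a finite composition of operations already justified by Propositions~1 and~2 and the three postulates.

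The main obstacle I expect is making sure that the unit marks along the axes really land on the axes, rather than at arbitrary points at unit distance from the previous mark; this is precisely the role of postulate iii, interpreted so that the ``given set of match-sticks'' can impose the constraint of lying on a previously constructed line. A secondary subtlety is that the proposition does not require an effective detection of when $B$ has been covered: the existence of a finite $n$ with $n \ge |AB|$ is guaranteed a priori from the Euclidean data, and the construction can simply be prescribed up to that $n$.
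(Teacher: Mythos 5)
Your construction is correct and uses exactly the tools that are legitimately available at this point (Proposition 1 for extension, Proposition 2 for perpendiculars at points on a line, and postulate iii for unit marks), but it is organized differently from the paper's proof. You fix two perpendicular axes through $A$ once and for all, lay down unit marks $A_k$ along each axis out to $n=\lceil\,\overline{AB}\,\rceil$, and erect a perpendicular at every mark; the grid is the union of these perpendiculars and the axes, and $B$ lies in $[-n,n]\times[-n,n]$. The paper instead builds the grid as a single chain of lines $l_0\perp l_1\perp l_2\perp\cdots$, where each $l_i$ is the perpendicular to $l_{i-1}$ at a point $A_{i-1}$ chosen at distance $\lceil i/2\rceil$ from $A_{i-2}$ and on the side that makes $[A_{i-1},A_i]$ cross $l_{i-3}$; this traces the same unit grid ``spirally.'' The practical difference is that the paper's spiral is a single construction sequence that does not depend on $B$ at all and in which each step refers only to the most recently laid lines, so that any point of the plane is swept up after finitely many steps of one fixed procedure; your version instead fixes the bound $n$ from the Euclidean data in advance (as you note, this is legitimate since the proposition only asserts existence of a finite construction, and you could also recover a $B$-independent sequence by running your construction for $n=1,2,3,\dots$ in succession). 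Both arguments are sound; yours is the more familiar Cartesian presentation, the paper's emphasizes a uniform enumeration of the whole grid. The one point to keep explicit, which you correctly flag, is that postulate iii is being read as allowing the unit mark to be placed \emph{on} a previously constructed line; this is the same reading the paper itself uses when it ``choses $A_1$ on $l_1$ having unit distance from $A_0$.''
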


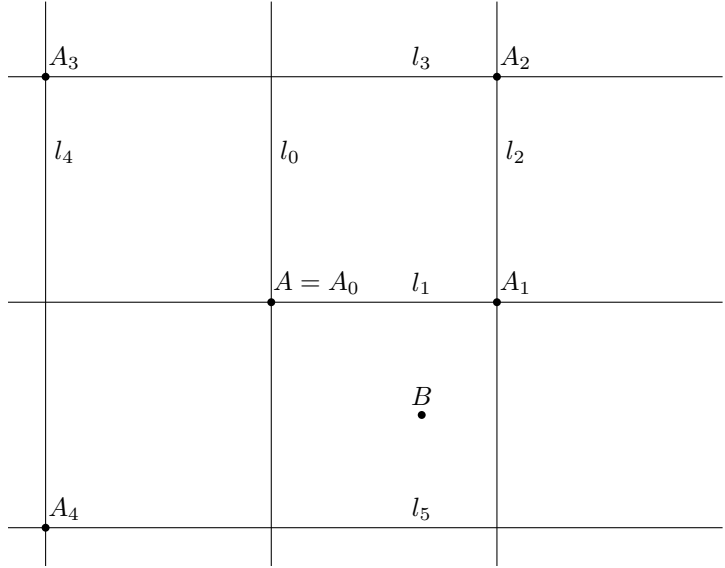
\begin{figure}[H]
\centering

\begin{tikzpicture}[x=1.0cm,y=1.0cm]
\clip(-3.5,-3.5) rectangle (6.5,5.5);

\fill(0,0) circle (1.5pt);
\draw(0.6,0.25) node {$A=A_0$};

\fill(2,-1.5) circle (1.5pt);
\draw(2,-1.25) node {$B$};

\draw(0,-4)--(0,4);
\draw(0.25,2) node {$l_0$};

\fill(3,0) circle (1.5pt);
\draw(3.25,0.25) node {$A_1$};
\fill(3,3) circle (1.5pt);
\draw (-4,0)-- (6,0);
\draw (2,0.25) node {$l_1$};

\draw(3.25,3.25) node {$A_2$};
\fill(-3,3) circle (1.5pt);
\draw (3,-4)--(3,4);
\draw (3.25,2) node {$l_2$};

\draw(-2.75,3.25) node {$A_3$};
\fill(-3,-3) circle (1.5pt);
\draw(-4,3)--(6,3);
\draw(2,3.25) node {$l_3$};

\draw(-2.75,-2.75) node {$A_4$};
\draw(-3,4)--(-3,-4);
\draw(-2.75,2) node {$l_4$};

\draw(-4,-3)--(6,-3);
\draw(2,-2.75) node {$l_5$};

\end{tikzpicture}

\caption{Construction of a coordinate grid}
\end{figure}

\begin{proof}
  Let $l_0$ be a line containing $A=A_0$ and $l_1$ be its perpendicular line passing through $A_0$. Chose $A_1$ on $l_1$ having unit distance from $A_0$. Then construct the perpendicular $l_2$ to $l_1$ and passing through $A_2$. Chose a point $A_2$ in a way similar to $A_1$ and construct $l_3$ the perpendicular to $l_2$ through $A_2$. $A_3$ is chosen on $l_3$ as having distance 2 from $A_2$ and such that $[A_2,A_3]$ intersects $l_0$ (which is finitely constructible). At each recursive step, we chose $A_i$ on $l_i$ having distance $\lceil\frac{i}{2}\rceil$ from $A_{i-1}$ and such that $[A_{i-1},A_i]$ intersects $l_{i-3}$. This then covers the plan "spirally". Since the distance between $A$
  and $B$ is finite, both will be in this grid sequence after a finite number
  of steps.
\end{proof}

\section{Combined constructions}

We now have the means to make more general constructions. We start by constructing the perpendicular bisector and specific parallel lines.

\begin{proposition}
  If $\left[ A, B \right]$ is a line segment, we can construct its
  perpendicular bisector.
\end{proposition}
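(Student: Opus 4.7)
The plan is to reduce the proposition to constructing the midpoint $M$ of $[A,B]$: once $M$ is located on the line $d$ containing $[A,B]$, Proposition 2 applied at $M$ supplies the perpendicular to $d$ at $M$, which is exactly the perpendicular bisector.

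To locate $M$, I would use a similar-triangles construction. First, invoke Proposition 1 to extend $[A,B]$ to the full line $d$, and apply Proposition 2 at each of $A$ and $B$ to erect perpendiculars $\ell_A$ and $\ell_B$ to $d$. Next, mark a point $A'$ on $\ell_A$ at unit distance from $A$ and on one side of $d$, and a point $B'$ on $\ell_B$ at unit distance from $B$ on the \emph{opposite} side of $d$; both points are directly available as extremities of unit sticks laid along $\ell_A$ and $\ell_B$. Finally, construct the line $A'B'$ and define $M$ as its intersection with $d$.

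Correctness is immediate from similar triangles. The right triangles $\triangle A A' M$ and $\triangle B B' M$ share a vertical angle at $M$ and satisfy $|AA'|=|BB'|=1$, so they are congruent, whence $|AM|=|BM|$, and $M$ is the midpoint of $[A,B]$. Proposition 2 applied at $M \in d$ then yields the perpendicular bisector.

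The main obstacle is not the geometric content but the non-simultaneity bookkeeping. One must ensure that $A'$ and $B'$ are placed on genuinely opposite sides of $d$, which can be verified afterwards by laying a stick from $A'$ toward $B$ and checking that it crosses $d$ strictly between $A$ and $B$; if not, $B'$ is replaced by its reflection on $\ell_B$. One must also actually draw the line $A'B'$, whose length $\sqrt{|AB|^2+4}$ may exceed the unit; this is handled by obtaining any short initial segment along the direction $A'B'$ (for example, via the coordinate grid supplied by Proposition 3) and then iterating Proposition 1, after which the remark following Proposition 1 guarantees that the intersection of $A'B'$ with $d$ is determined in finitely many steps.
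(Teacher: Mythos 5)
Your reduction to the midpoint is geometrically sound as Euclidean reasoning, but the key step --- ``construct the line $A'B'$'' --- is not available at this point in the development, and this is a genuine gap rather than bookkeeping. The points $A'$ and $B'$ lie at distance $\sqrt{|AB|^2+4}\geq 2$, so Postulate i.\ cannot connect them, and Proposition 1 only \emph{extends} a segment that already lies on the desired line; it cannot create an initial segment in the direction $A'B'$ out of two distant points. Producing such a segment is exactly the content of Proposition 6 (the line through two arbitrary points), which the paper proves \emph{after} and \emph{by means of} the perpendicular bisector (via Proposition 5 and repeated midpoint-taking). Your suggested patch via the coordinate grid of Proposition 3 does not help: the grid consists of lines parallel and perpendicular to a chosen axis, and gives no stick aligned with the transversal direction $A'B'$. (A smaller instance of the same problem: checking sidedness by ``laying a stick from $A'$ toward $B$'' again requires those two points to be within unit distance.) So the argument is circular in the paper's logical order.

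The paper's own proof is designed precisely to avoid ever joining two distant points. It places two lines parallel to $AB$ at distance less than one unit on either side (reachable by unit sticks from the perpendiculars at $A$ and $B$), then runs two zigzags of unit sticks between $AB$ and each parallel, one starting from the $A$-end and one from the $B$-end, until the advancing segments cross; by symmetry the crossing points $P$ and $Q$ lie on the perpendicular bisector and are within unit distance of each other, so Postulate i.\ finishes the construction. If you want to keep your similar-triangles idea, you would need to first supply an independent, unit-stick-only construction of the line through two points more than a unit apart --- which is essentially re-proving Proposition 6 without Proposition 4.
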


\begin{figure}[H]
\centering

\begin{tikzpicture}[x=1.5cm,y=1.5cm]
\clip(-3,-1.5) rectangle (6,4.25);

\draw(-4,0)--(6,0);
\fill(-2,0) circle (1.5pt);
\draw(-1.75,0.25) node {$A$};
\fill(5,0) circle (1.5pt);
\draw(5.25,0.25) node {$B$};

\draw(-2,-3)--(-2,3);
\draw(5,-3)--(5,3);
\fill(-2,2) circle (1.5pt);
\draw(-2.5,2.25) node {$R=R_0$};
\fill(-2,-1) circle (1.5pt);
\draw(-2.5,-0.75) node {$S=S_0$};
\fill(5,-1) circle (1.5pt);
\draw(5.5,2.25) node {$T=T_0$};
\fill(5,2) circle (1.5pt);
\draw(5.5,-0.75) node {$U=U_0$};

\draw(-2,2)--(0.24,0);
\fill(0.24,0) circle (1pt);
\draw(0.24,0.25) node {$R_1$};
\draw(0.24,0)--(2.48,2);
\fill(2.48,2) circle (1pt);
\draw(2.48,2.25) node {$R_2$};

\fill(2.76,0) circle (1pt);
\draw(2.76,0.25) node {$T_1$};
\draw(5,2)--(2.76,0);
\fill(0.52,2) circle (1pt);
\draw(0.52,2.25) node {$T_2$};
\draw(2.76,0)--(0.52,2);

\fill(0.83,0) circle (1pt);
\draw(0.83,0.25) node {$S_1$};
\draw(-2,-1)--(0.83,0);
\fill(3.66,-1) circle (1pt);
\draw(3.66,-1.25) node {$S_2$};
\draw(0.83,0)--(3.66,-1);

\fill(2.17,0) circle (1pt);
\draw(2.17,0.25) node {$U_1$};
\draw(5,-1)--(2.17,0);
\fill(-0.66,-1) circle (1pt);
\draw(-0.66,-1.25) node {$U_2$};
\draw(2.17,0)--(-0.66,-1);

\draw(-2,2)--(5,2);
\draw(-2,-1)--(5,-1);

\fill(1.5,1.11) circle (1.5pt);
\draw(1.75,1.46) node {$P$};
\fill(1.5,-0.26) circle (1.5pt);
\draw(1.75,-0.55) node {$Q$};

\fill(1.5,0) circle (1.5pt);
\draw(1.75,0.25) node {$C$};

\draw(1.5,-3)--(1.5,3);

\end{tikzpicture}

\caption{Construction of a perpendicular bisector}
\end{figure}
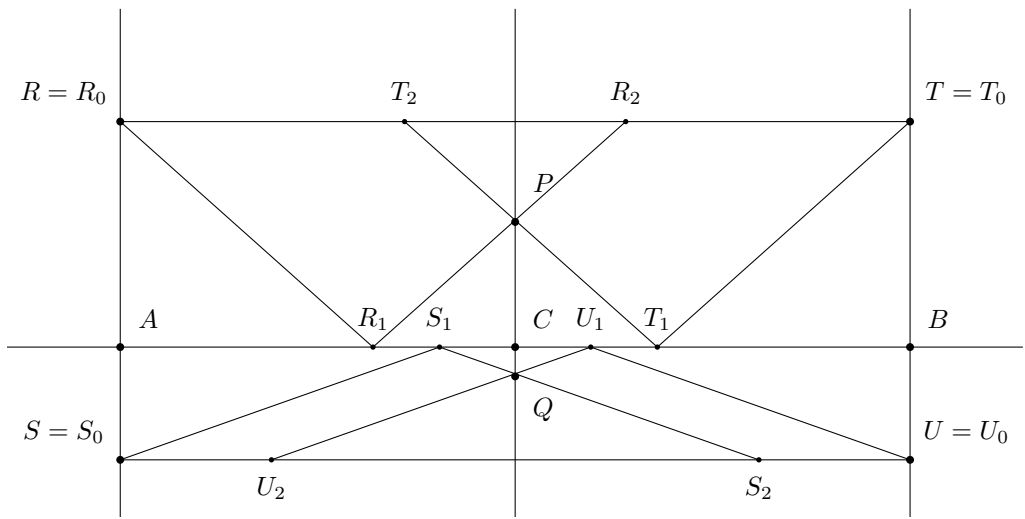

\begin{proof}
  Using Proposition 2, we construct the perpendicular lines to $AB$ at $A$ and $B$
  respectively. Using Postulate iii., we can choose a point $R=R_0$ on the
  perpendicular to $A$ with distance less than a unit from $AB$. A second
  point $S=S_0$ is constructed on the other side of $AB$ and at a unit distance from the
  first point. Now we know that perpendiculars at these points are both
  parallel to $AB$ and have unit distance. We then construct $T$ and $U$ the intersection points of these perpendiculars with the perpendicular through $B$ respectively. By using the process shown on the
  picture, we iteratively construct $R_i,T_i,S_i,U_i$ by laying down unit sticks from $R_{i-1},T_{i-1},S_{i-1}$ and $U_{i-1}$ respectively. We stop these two processes when $[R_{i-1},R_i]$ and $[T_{i-1},T_i]$ intersect, respectively when $[S_{i-1},S_i]$ and $[U_{i-1},U_i]$ intersect. This process then yields respective intersection points $P$ and $Q$ at a distance less than one.
  Therefore, we can draw the line through those two points. A basic
  geometrical argument proves that $P Q$ is the perpendicular bisector of $[A,B]$. As a corollary, we deduce that its intersection point with $A B$,
  called $C$, is the midpoint of the segment.
\end{proof}

Since the construction of parallel lines is closely related to the construction of perpendicular lines, the following result follows naturally:

\begin{proposition}
  Given a line $d$ and a point $A$ with distance from $d$ less than one unit, we
  can construct the perpendicular and the parallel lines to $d$ passing through $A$.
\end{proposition}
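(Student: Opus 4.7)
The plan is to reduce the construction to the two main tools already at our disposal: Proposition 2, which builds a perpendicular at a point that is itself on the line, and Proposition 4, which builds the perpendicular bisector of an arbitrary segment. The hypothesis that $A$ is at distance strictly less than $1$ from $d$ is exactly what makes Postulate iii directly applicable, because it guarantees that the unit circle centred at $A$ meets $d$ transversally in two distinct points.

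First I would apply Postulate iii from the point $A$, using the line $d$ (extended as needed by Proposition 1) as the reference set of match-sticks, to determine a point on $d$ at unit distance from $A$. Since the distance from $A$ to $d$ is strictly less than one, there are two such points; applying Postulate iii on either side of the foot of the perpendicular produces both of them. Call them $B$ and $C$. By construction $\overline{AB}=\overline{AC}=1$, so $A$ lies on the perpendicular bisector of the segment $[B,C]$.

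Next, I would feed $[B,C]$ into Proposition 4. Note that $\overline{BC}=2\sqrt{1-h^2}<2$, where $h<1$ is the distance from $A$ to $d$, so the segment is bounded and Proposition 4 applies without modification. The line $p$ it produces is perpendicular to $d$ (because $B,C\in d$ and $p$ is the perpendicular bisector of $[B,C]$) and passes through $A$ (because $A$ is equidistant from the two endpoints). Hence $p$ is the perpendicular to $d$ through $A$. Finally, since $A\in p$, I would apply Proposition 2 to the line $p$ at the point $A$ to obtain the line $p'$ perpendicular to $p$ at $A$; this $p'$ is then perpendicular to a perpendicular of $d$, hence parallel to $d$, and passes through $A$ by construction.

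The only genuinely delicate step is the first one, namely justifying that Postulate iii really does furnish both intersection points $B$ and $C$ on $d$. The postulate only promises a single point at unit distance relative to a given set of sticks, so one must argue that the hypothesis $h<1$ allows the postulate to be invoked a second time with the same data to yield the second intersection, or alternatively that one may designate the side of $d$ on which the point is to be found. The rest of the argument is a routine combination of previously proven propositions, and no quantitative bound beyond $\overline{BC}<2$ is needed.
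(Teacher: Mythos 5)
Your proposal is correct and follows essentially the same route as the paper: find the two points of $d$ at unit distance from $A$ via Postulate iii, apply Proposition 4 to the resulting isosceles configuration to get the perpendicular bisector through $A$, then take the perpendicular to that line at $A$ (Proposition 2) to obtain the parallel. Your added remark about justifying that Postulate iii yields both intersection points is a fair point of care that the paper itself glosses over, but it does not change the argument.
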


\begin{proof}
  Determine the two points $P$ and $Q$ with unit distance from $A$ with $d$. Using
  Proposition 4 and the fact that $\vartriangle A P Q$ is isosceles, we can
  construct the perpendicular bisector of $[P Q]$, which passes through $A$. The
  perpendicular to this line at $A$ is parallel to $d$, which concludes the
  proof.
\end{proof}

This result now gives us the announced method of determining whether two lines are parallel. It is enough to consider the parallel through one of the lines that passes through a point of the other line and check if they both are coincident.

\section{Advanced constructions}

We will now verify the validity of the remaining Euclidean postulates. We will prove the uniqueness of the line passing through 2 given points, as well as the pointwise constructibility of circles with given incenter and radius.

Most of the previous constructions have dealt with bounded figures. We now combine those constructions with the unit grid to extend our geometry to non-bounded alignment, parallelism and perpendicular lines.

\begin{proposition}
  If $A$ and $B$ are two distinct points, we can construct their commoun line. If $l$ and $p$ are given lines, we can also construct the parallel to $l$ which passes through $B$, as well as the perpendicular to $p$ which passes through $B$.
\end{proposition}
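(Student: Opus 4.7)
The plan is to dispatch the three sub-claims in the order stated: the line through two points first, then the parallel, then the perpendicular. Each will reduce to the coordinate grid of Proposition~3 combined with the local constructions of Propositions~2, 4, and~5.

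For the line through $A$ and $B$, I use Proposition~3 to build a grid containing both points, with $A$ at the intersection of two perpendicular grid lines. Since $B$ lies inside some unit cell, Proposition~5 applied to the nearest vertical grid line produces the vertical line $v_B$ through $B$; intersecting $v_B$ with the horizontal grid line $l_0$ through $A$, after extending as needed via Proposition~1, gives a point $C$ such that $ACB$ is a right triangle whose two legs lie on already-constructed lines. To produce the hypotenuse as an actual chain of match-sticks, I choose $n$ with $|AB|/2^n < 1$ and subdivide each leg into $2^n$ equal parts by iterated bisection (Proposition~4 is available on segments of any length, since its proof only uses unit sticks together with Propositions~1 and~2). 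For each $k$, the $k$-th dyadic point on $AB$ is then obtained as the intersection of the vertical line through the $k$-th subdivision of $[A,C]$ and the horizontal line through the $k$-th subdivision of $[C,B]$, both perpendiculars being provided by Proposition~2. Consecutive dyadic points are within unit distance, so Postulate~i joins them by match-sticks.

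For the parallel to $l$ through $B$, pick any stick $[P_1,P_2]$ of $l$, draw the segments $BP_1$ and $BP_2$ using the preceding step, and bisect them via Proposition~4 to obtain midpoints $M_1, M_2$. The midsegment theorem in the triangle $BP_1P_2$ guarantees that $M_1M_2$ is parallel to $l$ and sits at exactly half the distance from $B$ to $l$. Iterating this halving along the lines $BP_i$ brings, after finitely many steps, a parallel to $l$ within unit distance of $B$; at that stage Proposition~5 supplies the parallel to $l$ through $B$ itself. The perpendicular to $p$ through $B$ then follows at once: construct the parallel to $p$ through $B$ by the same method, and apply Proposition~2 at $B$ to this parallel, yielding a line that is also perpendicular to $p$.

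The main obstacle is the first part. One must check that the intersection-of-grid-perpendiculars recipe really lands on $AB$ at each dyadic subdivision, that Proposition~4 applies to the possibly long grid-aligned legs $[A,C]$ and $[C,B]$, and that the whole subdivision procedure terminates in finitely many steps (which it does, since $|AB|$ is finite and each level halves the spacing). Once this is settled, parts two and three are essentially routine assembly via the midsegment theorem and Propositions~2 and~5.
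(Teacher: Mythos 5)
Your proposal is correct, and it shares the paper's skeleton for the first claim: build the grid of Proposition~3 around $A$, localize $B$ in a unit cell, use Proposition~5 to drop the two grid-aligned lines through $B$, and obtain the right triangle $ACB$ whose legs lie on constructed lines. The divergence is in how the hypotenuse is realized. The paper repeatedly bisects $[A,C]$ and $[B,C]$ toward the corner $C$, so that by the intercept theorem the segments $[A_iB_i]$ are parallel to $AB$ with length $|AB|/2^i$; once one fits under a unit it is laid as a stick, and the line $AB$ is then recovered as the parallel to that short stick through $B$ --- a final parallel-transfer step that the paper leaves somewhat underspecified, since the short stick sits near $C$ and $B$ may be far from it. You instead subdivide both legs into $2^n$ equal parts and erect perpendiculars at corresponding subdivision points; their intersections are genuine points of $AB$ at spacing $|AB|/2^n<1$, so consecutive ones are joined directly by Postulate~i. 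This costs exponentially many bisections rather than $n$, but it produces $AB$ pointwise and sidesteps the final parallel transfer entirely. Your treatment of the parallel to $l$ through $B$ by iterated midsegments of the triangle $BP_1P_2$ is likewise a legitimate alternative to the paper's grid-based route (where the line $d$ perpendicular to $p_j$ already serves as the parallel to $l$), and reducing the perpendicular to ``parallel then Proposition~2'' is sound. Your parenthetical justification that Proposition~4 applies to legs of arbitrary length is the right thing to check and is indeed satisfied, since its zigzag terminates in a number of steps proportional to the segment's length.
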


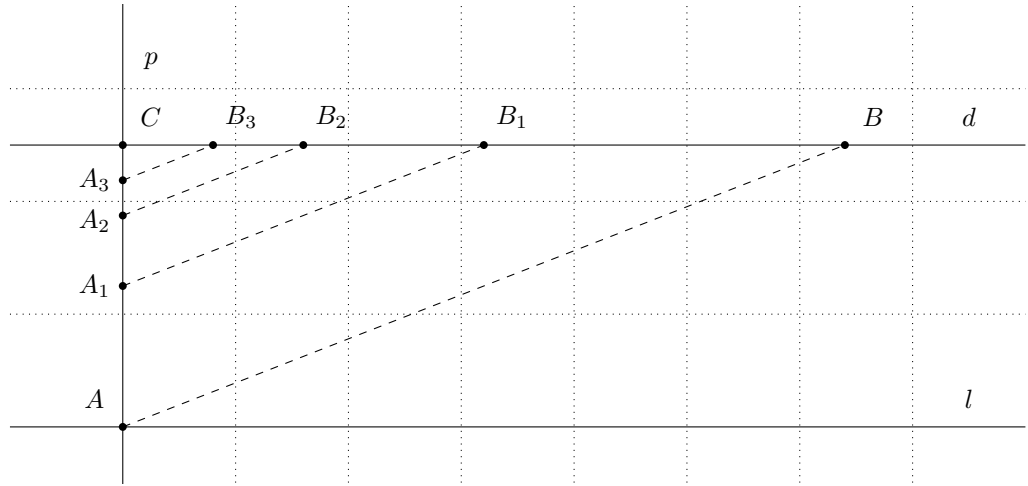
\begin{figure}[H]
\centering

\begin{tikzpicture}[x=1.5cm,y=1.5cm]
\clip(-4.5,-0.5) rectangle (6.5,3.75);

\fill(-3,0) circle (1.5pt);
\draw(-3.25,0.25) node {$A$};
\fill(3.4,2.5) circle (1.5pt);
\draw(3.65,2.75) node {$B$};
\draw[dashed](-3,0)--(3.4,2.5);
\draw(-4,0)--(5,0);
\draw[dotted](-4,1)--(5,1);
\draw[dotted](-4,2)--(5,2);
\draw[dotted](-4,3)--(5,3);
\draw(-3,-1)--(-3,4);
\draw[dotted](-2,-1)--(-2,4);
\draw[dotted](-1,-1)--(-1,4);
\draw[dotted](0,-1)--(0,4);
\draw[dotted](1,-1)--(1,4);
\draw[dotted](2,-1)--(2,4);
\draw[dotted](3,-1)--(3,4);
\draw[dotted](4,-1)--(4,4);

\draw(4.5,0.25) node {$l$};
\draw(-2.75,3.25) node {$p$};

\draw(-4,2.5)--(5,2.5);
\draw(4.5,2.75) node {$d$};
\fill(-3,2.5) circle (1.5pt);
\draw(-2.75,2.75) node {$C$};

\fill(-3,1.25) circle (1.5pt);
\draw(-3.25,1.25) node {$A_1$};
\fill(0.2,2.5) circle (1.5pt);
\draw(0.45,2.75) node {$B_1$};
\draw[dashed](-3,1.25)--(0.2,2.5);
\fill(-3,1.875) circle (1.5pt);
\draw(-3.25,1.825) node {$A_2$};
\fill(-1.4,2.5) circle (1.5pt);
\draw(-1.15,2.75) node {$B_2$};
\draw[dashed](-3,1.875)--(-1.4,2.5);
\fill(-3,2.1875) circle (1.5pt);
\draw(-3.25,2.1875) node {$A_3$};
\fill(-2.2,2.5) circle (1.5pt);
\draw[dashed](-3,2.1875)--(-2.2,2.5);
\draw(-1.95,2.75) node {$B_3$};
\end{tikzpicture}

\caption{Construction of the line between two points}
\end{figure}

\begin{proof}
  We start by constructing a line $l$ passing through $A$ and let $p$ be the
  perpendicular to $l$ passing through $A$. If $B$ lies on any of those lines,
  the proof is finished. Otherwise, we construct, using Proposition 3, a coordinate unit grid square having $(A, l, p)$ as determing lines and being sufficiently large to contain $B$. Let $l_i$ , $l_{i + 1}$ and $p_j$ , $p_{j + 1}$ be the lines defining the unit square that contains $B$. As the distance
  of $B$ from any four of those lines is less than 1 unit, we may use Proposition 5 and
  draw a line $d$ perpendicular with $p_j$ (and hence to $p$) passing through $B$. Similarly, we can obtain the line perpendicular with $d$ through $B$. As $d$ is also
  perpendicular to $p$, let $C$ be its intersection with $l$. Construct the
  midpoints $A_1$ of $\left[ A C \right]$ and $B_1$ of $\left[ B C \right]$. Using the intercept theorem, we know that the segment drawn between those two points should be
  parallel to $\left[ A B \right]$ and have half of its length. Reusing this
  algorithm, we can go on by constructing the midpoints of the new segments
  and so on. After a finite number of steps, the distance between the two
  actual midpoints is less than a unit. Therefore, we can draw the line
  between those two points. Using again the previous construction, we can now
  construct the parallel to this line and through $B$, which passes also
  through $A$. 
\end{proof}

We will now move on to the circular intersections. Note that there are two
different definitions of a circle. On one hand, a circle can be given as the
set composed of its center and one of its points. On the other hand, it can
also be given as the set composed of the center and a segment defining its
radius and not necessarily having the center as one of its extremities. The
construction of a parallelogram enables us to translate the given segment to a
suitable position. Therefore, whenever in the following, we are referring to a
circle, we consider that we know its center and one of its points. We first start by intersecting a line and a circle.

\begin{proposition}
  Let $A B$ be a line and $\Gamma$ be a circle with centre $O$ and $S \in
  \Gamma$. Then we can construct the intersection of $(AB)$ and $\Gamma$,
  respectively determine whether this intersection is empty.
\end{proposition}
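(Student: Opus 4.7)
The plan is to drop the perpendicular from $O$ to $(AB)$, and then reduce the general line--circle intersection to the intersection of the line with a unit-radius circle, which Postulate iii handles directly. The reduction is by a homothety of centre $O$ and ratio $1/r$, where $r=|OS|$.

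First, using Proposition 6, I construct the perpendicular from $O$ to $(AB)$ and call $H$ its foot. Writing $d=|OH|$, the intersection $\Gamma\cap(AB)$ contains two, one, or zero points according as $d<r$, $d=r$, or $d>r$, and in the two first cases the intersection points are symmetric about $H$ on $(AB)$.

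Next, I build the homothety $h$ of centre $O$ and ratio $1/r$ via the intercept theorem. Extending $(OS)$ if necessary by Proposition 1, Postulate iii applied at $O$ along this line yields a point $S_1\in(OS)$ with $|OS_1|=1$; so $S_1$ and $S$ lie on the same ray out of $O$ at distances $1$ and $r$. For any point $X\neq O$ the image $h(X)$ is obtained by drawing $XS$, then its parallel through $S_1$ (Proposition 6), and intersecting with $(OX)$; the intercept theorem then forces $|Oh(X)|=|OX|/r$. Applying $h$ to $H$ produces $H^*$, and Postulate i compares $|OH^*|$ to $1$, thereby distinguishing the three cases $d<r$, $d=r$, $d>r$ and settling the emptiness question.

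In the case $d<r$, I apply $h$ to two distinct points of $(AB)$ (for instance $A$ and $B$) to obtain $A^*,B^*$, and lay a stick---extending as needed by Proposition 1---along the line $(A^*B^*)$, which is parallel to $(AB)$ and lies at distance $d/r<1$ from $O$. Postulate iii, used from $O$ with this stick, returns the two points $P_1^*,P_2^*$ on $(A^*B^*)$ at unit distance from $O$. Finally, the inverse homothety $h^{-1}$ of ratio $r$---the same intercept diagram with the roles of $S$ and $S_1$ swapped---sends each $P_i^*$ to a point $P_i$ on the ray $OP_i^*$ with $|OP_i|=r$; by construction $P_i\in(AB)\cap\Gamma$.

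The main obstacle is orchestrating the auxiliary unit reference $S_1$ cleanly across the sub-cases $r<1$, $r=1$, $r>1$, and checking that every invocation of the intercept diagram is legitimate (the parallel can be drawn by Proposition 6, the resulting point lies on the correct ray from $O$, and the target stick is long enough to host the unit-ruler intersection). Once $S_1$ is produced, the remainder is a routine chaining of Propositions 1 and 6 together with the unit-ruler primitive of Postulate iii.
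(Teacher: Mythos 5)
Your proof is correct and rests on the same key idea as the paper's: rescale the configuration by the factor $1/r$ via the intercept theorem so that the circle acquires unit radius and Postulate iii becomes applicable, then transport the intersection points back. The difference is the centre of the homothety. You centre it at $O$, which fixes the circle's centre but moves the line: you must push two points of $(AB)$ forward to materialise the image line $(A^*B^*)$, lay sticks along it, find the unit-distance points there, and pull both of them back. The paper centres the homothety at $A$, a point of the line, so that $(AB)$ is mapped to itself; only the single point $O$ needs to be transported, to a point $O'$ with $\overline{O'S''}=1$ obtained by completing a parallelogram on the unit point $S'$ of $(OS)$, after which Postulate iii is applied from $O'$ directly to the original line and each intersection point is recovered by one parallel to $O'R'$ through $O$. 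The paper's choice is therefore somewhat more economical in construction steps; yours has the small advantage that the foot $H$ and the scaled distance $\overline{OH^*}=d/r$ give a clean a priori trichotomy for the emptiness question, whereas the paper reads the answer off the number of unit-distance points returned by Postulate iii. Both versions leave the same degeneracies implicit (applying the intercept diagram to a point lying on the line $(OS)$, or the case $A=O$, or $r=1$ where no rescaling is needed), all of which are dispatched by perturbing the auxiliary point or, as the paper does, by treating $r=1$ separately at the outset.
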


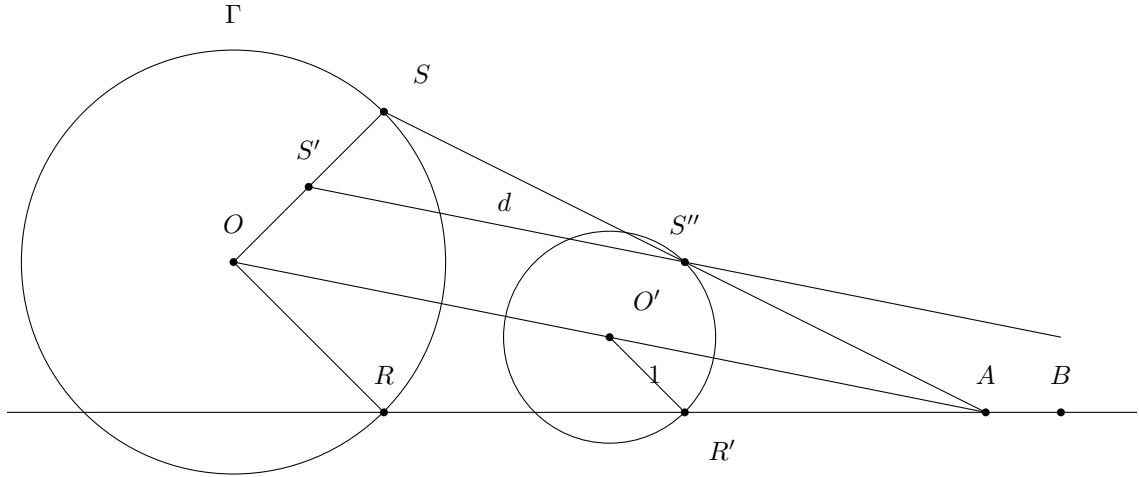
\begin{figure}[H]
\centering

\begin{tikzpicture}[x=2cm,y=2cm]
\clip(-1.5,-0.5) rectangle (6,4.5);

\draw(0,1) circle (1.41);
\fill(0,1) circle (1.5pt); 
\draw(0,1.25) node {$O$};
\draw(0,2.65) node {$\Gamma$};
\draw(-3,0)--(7,0);
\fill(5,0) circle (1.5pt);
\draw(5,0.25) node {$A$};
\fill(5.5,0) circle (1.5pt);
\draw(5.5,0.25) node {$B$};
\fill(2.5,0.5) circle(1.5pt);
\draw(2.75,0.75) node {$O'$};
\fill(1,2) circle(1.5pt);
\draw(1.25,2.25) node {$S$};
\fill(0.5,1.5) circle (1.5pt);
\draw(0.5,1.75) node {$S'$};
\fill(3,1) circle (1.5pt);
\draw(3,1.25) node {$S''$};
\draw(0,1)--(5,0);
\draw(0,1)--(1,2);
\draw(1,2)--(5,0);
\draw(2.5,0.5) circle (0.705);
\fill(3,0) circle (1.5pt);
\draw(3.25,-0.25) node {$R'$};
\draw(3,0)--(2.5,0.5);
\draw(2.8,0.25) node {$1$};
\draw(0.5,1.5)--(5.5,0.5);
\draw(1.8,1.4) node {$d$};
\fill(1,0) circle (1.5pt);
\draw(1,0.25) node {$R$};
\draw(0,1)--(1,0);

\end{tikzpicture}
\caption{Intersection of a circle and a line}
\end{figure}

\begin{proof}
  If $d \left( O, S \right) = 1$, the (possibly empty) intersection is
  obtained by the simple use of postulate iii. Otherwise, chose $S'$ on
  ${(OS)}$ such that $\overline{O S'} = 1$. Let $d$ be the parallel line to
  $A O$ that goes through $S'$. Let $S''$ be the intersection point of $(AS)$
  and $d$. Let $O'$ be the intersection point of $AO$ with the parallel to $O S$ that passes
  through $S''$. There are 0, 1 or 2 point on $AB$ with unit
  distance from $O'$, determining the number of intersection points between $AB$
  and $\Gamma$. If such a point $R'$ at unit distance exists, then the
  intersection point of $A B$ and the parallel to $O' R$' is both on $AB$ and
  on $\Gamma$, which gives the announced construction.
\end{proof}

Now that we can intersect lines and circles, we will intersect two given circles. The idea will be to transform this intersection problem into an intersection problem of the previous type.

\begin{proposition}
  Let $\Gamma_1$ and $\Gamma_2$ be two circles with centres $O_1$ and $O_2$
  and radii $r_1$ and $r_2$. We can determine whether they are intersective.
  If they are, we can find their intersection points.
\end{proposition}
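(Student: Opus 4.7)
The plan is to reduce the two-circle intersection to a line-circle intersection (handled by Proposition 7) by constructing the \emph{radical axis} of $\Gamma_1$ and $\Gamma_2$, namely the perpendicular to the centre-line $O_1 O_2$ through the foot $M$ of the would-be common chord.

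First I will use Proposition 6 to draw the line $\ell$ through $O_1$ and $O_2$, and to transport segments of lengths $r_1$ and $r_2$ onto $\ell$ starting from $O_1$. With $d := |O_1 O_2|$, $r_1 + r_2$ and $|r_1 - r_2|$ now available as collinear segments sharing an endpoint, Postulate i. together with Proposition 1 lets me compare them and hence decide whether $\Gamma_1 \cap \Gamma_2$ is empty, reduced to one point, or consists of two points. The tangency cases $d = r_1 \pm r_2$ put the unique intersection point on $\ell$ itself, where it is immediately constructible.

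In the generic intersecting case, for any common point $X$ of the two circles one has $|XO_1|^2 - |XO_2|^2 = r_1^2 - r_2^2$, so the orthogonal projection $M$ of $X$ onto $\ell$ satisfies
$$|O_1 M| = \frac{d^2 + r_1^2 - r_2^2}{2d} = \frac{d}{2} + \frac{(r_1-r_2)(r_1+r_2)}{2d},$$
independently of $X$. The first summand is the midpoint of $[O_1, O_2]$, which is provided by Proposition 4. The second summand is a length of the classical ``fourth proportional'' form $ab/c$ (with $a = r_1-r_2$, $b = r_1+r_2$, $c = 2d$), and can be realised geometrically by the intercept theorem: lay off $c$ and $b$ along one ray from an auxiliary point, $a$ along another ray, and close the configuration with the parallel line guaranteed by Proposition 6. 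Marking the resulting length on $\ell$, added to or subtracted from $O_1$ according to sign, locates $M$. Once $M$ is in hand, Proposition 6 yields the perpendicular $\rho$ to $\ell$ at $M$, and Proposition 7 intersects $\rho$ with $\Gamma_1$; by the derivation of $M$, the resulting at-most-two points are exactly the common points of $\Gamma_1$ and $\Gamma_2$.

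The main obstacle is the arithmetic construction of $|O_1 M|$: the fourth-proportional step requires several auxiliary parallels and must be carried out without ever using two match-sticks simultaneously, which forces some care in ordering the micro-steps but is otherwise standard. A secondary technical point is that the signed formula for $|O_1 M|$ can be negative or exceed $d$ (when one disc contains the other's centre); this is handled uniformly by constructing $M$ on the appropriate side of $O_1$, as permitted by Proposition 1. Everything else is a direct chaining of Propositions 4, 6 and 7.
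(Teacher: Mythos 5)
Your proposal is correct, but it reaches the radical axis by a genuinely different route than the paper. The paper's proof is purely synthetic: it picks two points $X,Y$ on $\Gamma_2$ with $(XY)$ not perpendicular to $(O_1O_2)$, uses Proposition 7 once to find the centre $O_3$ of an auxiliary circle $\Gamma_3$ of radius $r_1$ through $X$ and $Y$ (as the intersection of the perpendicular bisector of $[XY]$ with the circle of radius $r_1$ about $X$), and then invokes the radical centre theorem: the radical axis of $\Gamma_2,\Gamma_3$ is $(XY)$, that of $\Gamma_1,\Gamma_3$ is the perpendicular bisector of $[O_1O_3]$ (equal radii), and their intersection point $P$ must lie on the radical axis of $\Gamma_1,\Gamma_2$, which is then drawn as the perpendicular to $(O_1O_2)$ through $P$. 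You instead locate the foot $M$ of the radical axis metrically, via $|O_1M| = \tfrac{d}{2} + \tfrac{(r_1-r_2)(r_1+r_2)}{2d}$, realised by the midpoint construction plus a fourth-proportional (intercept-theorem) gadget. Both arguments then finish identically by intersecting the radical axis with $\Gamma_1$ via Proposition 7. What each buys: the paper's version needs no segment transport and no length arithmetic, only one extra application of Proposition 7 and perpendicular bisectors, at the cost of the radical-centre theorem and the mild non-perpendicularity condition ensuring $a_1\cap a_2\neq\emptyset$; your version is more computational and requires translating $r_1$, $r_2$ onto the centre line and executing the fourth-proportional construction without simultaneous sticks, but in exchange it gives the emptiness/tangency test up front from the triangle-inequality comparisons of $d$ with $r_1+r_2$ and $|r_1-r_2|$, whereas the paper only detects emptiness at the final line--circle step. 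The only points you should make explicit are the degenerate case $r_1=r_2$ (where the fourth-proportional factor vanishes and $M$ is simply the midpoint of $[O_1,O_2]$) and how you obtain the segment $2d$ (e.g.\ by halving $r_1+r_2$ instead, writing the correction term as $\bigl((r_1-r_2)\cdot\tfrac{r_1+r_2}{2}\bigr)/d$); both are routine.
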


\begin{figure}[H]
\centering

\begin{tikzpicture}[x=1.0cm,y=1.0cm]
\clip(-3.5,-5.5) rectangle (9.5,6.5);

\draw(0,-2) circle (3cm);
\fill (0,-2) circle (1.5pt) coordinate (O1);
\draw (0,-2.25) node {$O_1$};
\draw (0,1.25) node {$\Gamma_1$};

\draw (-1.5,-2.25) node {$r_1$};
\draw (0,-2)-- (-3,-2);

\draw(4,-1) circle (2cm);
\fill (4,-1) circle (1.5pt) coordinate (O2);
\draw (4,-1.25) node {$O_2$};
\draw (4,1.25) node {$\Gamma_2$};

\draw[shift={(4cm,-1cm)},rotate=35] (0,0)--(2,0);
\draw[shift={(4cm,-1cm)},rotate=35] (1,-0.25) node {$r_2$};

\draw (4,3) circle (3cm);
\fill(4,3) circle (1.5pt) coordinate(O3);
\draw(4,3.25) node {$O_3$};
\draw(4,6.25) node {$\Gamma_3$};

\draw[dotted] (O1)--(O3);
\draw[dotted] (O1)--(O2);

\draw(4,3)--(1,3);
\draw(2.5,3.25) node {$r_1$};

\fill[shift={(4cm,-1cm)},rotate=44](2,0) circle(1.5pt) coordinate (X);
\draw[shift={(4cm,-1cm)},rotate=44](2.25,0.25) node {$X$};

\fill[shift={(4cm,-1cm)},rotate=136](2,0) circle(1.5pt) coordinate (Y);
\draw[shift={(4cm,-1cm)},rotate=136](1.75,0) node {$Y$};

\draw[dotted](X) circle (3cm);

\draw[dotted](4,-5)--(4,6.5);

\draw[shift={(X)}](-10,0)--(5,0);
\draw[shift={(X)}](2,0.25) node {$a_1$};

\draw[shift={(2,0.5)}](-5,4)--(5,-4);
\draw[shift={(2,0.5)}](-4,3.5) node {$a_2$};

\fill[shift={(X)}](-3.3,0) circle(1.5pt) coordinate (P);
\draw[shift={(P)}](0.1,0.25) node {$P$};

\draw[shift={(P)}](-1,4)--(1.5,-6);
\draw[shift={(P)}](1.4,-5) node {$a_3$};

\end{tikzpicture}

\caption{Intersection of two Circles}
\end{figure}
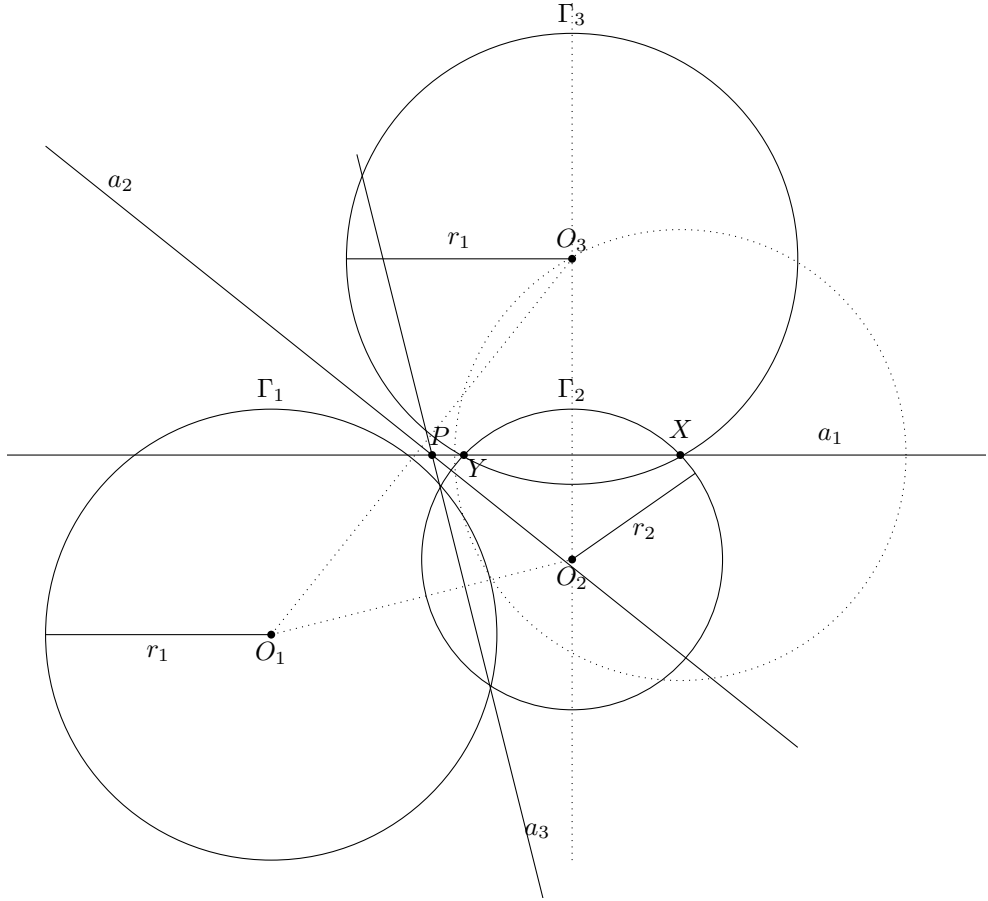

\begin{proof}
  Let $X$ and $ Y$ be two points on $\Gamma_2$ such that $(O_1 O_2)$ and $(XY)$ are not perpendicular. This is a mild assumption since we can easily construct the antipodal and the respective $90^{\circ}$-rotations of a given point on the circle, producing at least one non-perpendicular line. Using proposition 7, we can construct the
  intersection point of the circle with centre $X$ and radius $r_1$ and the
  perpendicular bisector of segment $\left[ X Y \right]$. Call this point
  $O_3$. Now, consider the circle $\Gamma_3$ with centre $O_3$ and radius
  $r_1$. The radical axis $a_1$ of $\Gamma_2$ and $\Gamma_3$ is $(X Y)$, as
  $\Gamma_2 \cap \Gamma_3 =\{X, Y\}$. The radical axis $a_2$ of $\Gamma_1$ and
  $\Gamma_3$ is the perpendicular bisector of the segment $\left[ O_1 O_3
  \right]$. This can be proved by symmetry, as $\Gamma_1$ and $\Gamma_3$ have
  same radius. By using the radical axis theorem, there exists a power point $P$ which is the intersection of $a_1$ and $a_2$ (which is non-empty by construction).
  Therefore, the perpendicular $a_3$ line to $(O_1 O_2)$ which passes through $P$ has to be the radical axis of $\Gamma_1$ and $\Gamma_2$. The
  intersection of this line with $\Gamma_1$ is equal to the (possibly empty)
  intersection of the two circles.
\end{proof}

\section{Conclusion}

We have proved the equivalence between the Euclidean postulate system and our new bounded system. One main advantage of this method is that it only uses bounded constructions but still obtainis the same results. Remark that it is easy to generalize our construction to the case where the unit segment and the ruler have length ratios that are not $1$, but any rational number. If however this ratio is an irrational number, it might be interesting to construct a geometry satisfying the postulates, but different from the Euclidean geometry.

\section*{Acknowledgements}
The authors like to thank the organizers of the First Internationational Summer School for Students in 2011. Without the inspiration the authors were given when they met at this occasion, this paper probably never would have been written. Thanks also go to Christophe Ley for his constructive remarks, corrections and suggestions. At the end, they would like to thank all their acquaintances for providing them with match-sticks and toothpicks whenever there was need for checking the correctness of their constructions.

\end{document}